\documentclass[reqno]{amsart}

\usepackage{amsmath,amsfonts,amssymb,amscd,verbatim,delarray,verbatim,epigraph}

\usepackage{xcolor}

\usepackage{amsthm}

\usepackage{url}



\newcommand{\F}{{\mathbb F}}

\newcommand{\Q}{{\mathbb Q}}

\pagestyle{headings}



\begin{document}

\newtheorem{theorem}{Theorem}

\newtheorem{corollary}[theorem]{Corollary}
\newtheorem{corol}[theorem]{Corollary}
\newtheorem{conj}[theorem]{Conjecture}
\newtheorem{prop}[theorem]{Proposition}
\newtheorem{lemma}[theorem]{Lemma}

\theoremstyle{definition}
\newtheorem{definition}[theorem]{Definition}
\newtheorem{example}[theorem]{Example}

\newtheorem{remarks}[theorem]{Remarks}
\newtheorem{remark}[theorem]{Remark}

\newtheorem{question}[theorem]{Question}
\newtheorem{problem}[theorem]{Problem}

\newtheorem{quest}[theorem]{Question}
\newtheorem{questions}[theorem]{Questions}

\def\toeq{{\stackrel{\sim}{\longrightarrow}}}
\def\into{{\hookrightarrow}}


\def\alp{{\alpha}}  \def\bet{{\beta}} \def\gam{{\gamma}}
 \def\del{{\delta}}
\def\eps{{\varepsilon}}
\def\kap{{\kappa}}                   \def\Chi{\text{X}}
\def\lam{{\lambda}}
 \def\sig{{\sigma}}  \def\vphi{{\varphi}} \def\om{{\omega}}
\def\Gam{{\Gamma}}   \def\Del{{\Delta}}
\def\Sig{{\Sigma}}   \def\Om{{\Omega}}
\def\ups{{\upsilon}}


\def\F{{\mathbb{F}}}
\def\BF{{\mathbb{F}}}
\def\BN{{\mathbb{N}}}
\def\Q{{\mathbb{Q}}}
\def\Ql{{\overline{\Q }_{\ell }}}
\def\CC{{\mathbb{C}}}
\def\R{{\mathbb R}}
\def\V{{\mathbf V}}
\def\D{{\mathbf D}}
\def\BZ{{\mathbb Z}}
\def\K{{\mathbf K}}
\def\XX{\mathbf{X}^*}
\def\xx{\mathbf{X}_*}

\def\AA{\Bbb A}
\def\BA{\mathbb A}
\def\HH{\mathbb H}
\def\PP{\Bbb P}

\def\Gm{{{\mathbb G}_{\textrm{m}}}}
\def\Gmk{{{\mathbb G}_{\textrm m,k}}}
\def\GmL{{\mathbb G_{{\textrm m},L}}}
\def\Ga{{{\mathbb G}_a}}

\def\Fb{{\overline{\F }}}
\def\Kb{{\overline K}}
\def\Yb{{\overline Y}}
\def\Xb{{\overline X}}
\def\Tb{{\overline T}}
\def\Bb{{\overline B}}
\def\Gb{{\bar{G}}}
\def\Ub{{\overline U}}
\def\Vb{{\overline V}}
\def\Hb{{\bar{H}}}
\def\kb{{\bar{k}}}

\def\Th{{\hat T}}
\def\Bh{{\hat B}}
\def\Gh{{\hat G}}


\def\cC{{\mathcal C}}
\def\cU{{\mathcal U}}
\def\cP{{\mathcal P}}
\def\cV{{\mathcal V}}
\def\cS{{\mathcal S}}

\def\CG{\mathcal{G}}

\def\cF{{\mathcal {F}}}

\def\Xt{{\widetilde X}}
\def\Gt{{\widetilde G}}


\def\hh{{\mathfrak h}}
\def\lie{\mathfrak a}

\def\XX{\mathfrak X}
\def\RR{\mathfrak R}
\def\NN{\mathfrak N}

\def\minus{^{-1}}

\def\GL{\textrm{GL}}            \def\Stab{\textrm{Stab}}
\def\Gal{\textrm{Gal}}          \def\Aut{\textrm{Aut\,}}
\def\Lie{\textrm{Lie\,}}        \def\Ext{\textrm{Ext}}
\def\PSL{\textrm{PSL}}          \def\SL{\textrm{SL}} \def\SU{\textrm{SU}}
\def\loc{\textrm{loc}}
\def\coker{\textrm{coker\,}}    \def\Hom{\textrm{Hom}}
\def\im{\textrm{im\,}}           \def\int{\textrm{int}}
\def\inv{\textrm{inv}}           \def\can{\textrm{can}}
\def\id{\textrm{id}}              \def\Char{\textrm{char}}
\def\Cl{\textrm{Cl}}
\def\Sz{\textrm{Sz}}
\def\ad{\textrm{ad\,}}
\def\SU{\textrm{SU}}
\def\Sp{\textrm{Sp}}
\def\PSL{\textrm{PSL}}
\def\PSU{\textrm{PSU}}
\def\rk{\textrm{rk}}
\def\PGL{\textrm{PGL}}
\def\Ker{\textrm{Ker}}
\def\Ob{\textrm{Ob}}
\def\Var{\textrm{Var}}
\def\poSet{\textrm{poSet}}
\def\Al{\textrm{Al}}
\def\Int{\textrm{Int}}
\def\Smg{\textrm{Smg}}
\def\ISmg{\textrm{ISmg}}
\def\Ass{\textrm{Ass}}
\def\Grp{\textrm{Grp}}
\def\Com{\textrm{Com}}
\def\rank{\textrm{rank}}

\def\char{\textrm{char}}

\def\wid{\textrm{wd}}

\newcommand{\Or}{\operatorname{O}}

\def\tors{_\def{\textrm{tors}}}      \def\tor{^{\textrm{tor}}}
\def\red{^{\textrm{red}}}         \def\nt{^{\textrm{ssu}}}

\def\sss{^{\textrm{ss}}}          \def\uu{^{\textrm{u}}}
\def\mm{^{\textrm{m}}}
\def\tm{^\times}                  \def\mult{^{\textrm{mult}}}

\def\uss{^{\textrm{ssu}}}         \def\ssu{^{\textrm{ssu}}}
\def\comp{_{\textrm{c}}}
\def\ab{_{\textrm{ab}}}

\def\et{_{\textrm{\'et}}}
\def\nr{_{\textrm{nr}}}

\def\nil{_{\textrm{nil}}}
\def\sol{_{\textrm{sol}}}
\def\End{\textrm{End\,}}

\def\til{\;\widetilde{}\;}

\def\min{{}^{-1}}

\def\AGL{{\mathbb G\mathbb L}}
\def\ASL{{\mathbb S\mathbb L}}
\def\ASU{{\mathbb S\mathbb U}}
\def\AU{{\mathbb U}}

\title{Elementary equivalence of Kac-Moody groups}

\author[Morita]{Jun Morita}

\address{
Jun Morita, Division of Mathematics, Faculty of Pure and Applied Sciences,
University of Tsukuba, Tsukuba, Ibaraki 305-8571, JAPAN}
\email{morita@math.tsukuba.ac.jp}

\author[Plotkin]{Eugene Plotkin}

\address{Eugene Plotkin, Department of Mathematics,
Bar-Ilan University, 5290002 Ramat Gan, ISRAEL}
\email{plotkin@math.biu.ac.il}

\begin{abstract}
The paper is devoted to model-theoretic properties of Kac-Moody groups with  the focus on
 elementary equivalence of Kac-Moody groups. We show that elementary equivalence of (untwisted) affine Kac-Moody groups implies coincidence of their generalized Cartan matrices and the elementary equivalence of their ground fields.
 We study also the Diophantine problem in affine Kac-Moody groups.  We show that  for the loop group the Diophantine problem is polynomial time equivalent (more precisely, Karp equivalent) to the
problem in the ground ring. Finally, we show that in affine Kac-Moody groups over finite fields the Diophantine problem is undecidable.


\end{abstract}

\maketitle

Keywords: Kac--Moody groups, Chevalley groups, elementary equivalence of groups, generalized Cartan matrix, twin root data.

\medskip

MSC: 20G44, 03C20

\section{Introduction. Elementary equivalence}\label{Equiv}

Importance of logical classifications of algebraic structures goes back to the famous  works of A.Tarski and A.Malcev. Here the term \textquoteleft\textquoteleft algebraic structure\textquoteright\textquoteright or just \textquoteleft\textquoteleft algebra\textquoteright\textquoteright means a  set with operations, see \cite{Ku}.  The main problem is to figure out {\it what are the algebras logically equivalent to a given one.}
We discuss this problem from the perspectives of  Kac-Moody groups.

Given an algebra $H$, its {\it elementary theory} $Th(H)$ is the set of all sentences (closed formulas) valid on $H$.

\begin{definition}\label{elem}
Two groups $H_1$ and $H_2$ are said to be {\it elementarily equivalent} if their elementary theories coincide.
\end{definition}

Very often we fix a class of algebras $\mathcal C$ and ask what are the algebras elementarily equivalent to a given algebra inside the class $\mathcal C$.

Last years a lot of attention was concentrated around the elementary equivalence questions for linear groups. It is a kind of folklore that a group elementarily equivalent to a linear group is linear. However, the question what is the group elementarily equivalent to a given linear group, is far from being trivial

Throughout the paper we will  denote the elementary equivalent objects $A_1$ and $A_2$ by $A_1\equiv A_2$ and isomorphic objects are denoted by $A_1\simeq A_2$.

Recall that the classical Keisler-Shelah's isomorphism theorem states, in particular, that two algebras $A$ and $B$  are elementary equivalent if and only if there is an ultrafilter such that
the corresponding ultrapowers of $A$ and $B$ are isomorphic  \cite{CK}, \cite{BS}.

Let $G$ be a group from a class of groups $\mathcal C$. In many cases one can fix the cardinality of groups from $\mathcal C$. We call $G$ {\it elementarily rigid}, if  for any group $H$ in  $\mathcal C$ the equivalence $G\equiv H$ implies isomorphism  $G\simeq H$.

 So, the rigidity question with respect to elementary equivalence looks as follows.

 \begin{problem}
 Let a class of algebras $\mathcal C$ and an algebra $H\in \mathcal C$ be given. Suppose that the elementary theories of algebras $H$ and $A\in \mathcal C$ coincide. Are they  {\it elementarily rigid}, that is,  are  $H$ and $A$  isomorphic?
 \end{problem}

 In general, elementary rigidity of groups is a rather rare phenomenon. However, studying the elementary properties of linear groups resulted in various examples of such kind. Historically, the first results were obtained in \cite{Mal2}, \cite{Zilber}. Recently, a series of papers on elementary equivalence of linear groups with the emphasis on Chevalley groups and arithmetic lattices has been published, see \cite{Bu}, ~\cite{ALM},~\cite{AM},~\cite{SM},~\cite{ST}. 
 Most of the results state a kind of elementary rigidity of these groups with respect to this or that class of groups  $\mathcal C$. The major example is the class of  finitely generated groups, cf. ~\cite{ST}, ~\cite{ALM}.

 In the paper we focus our attention on the similar problems of elementary equivalence for Kac-Moody groups and obtain a sort of rigidity results.

\section{Affine Kac-Moody groups}\label{km}

Kac-Moody groups can be viewed as infinite dimensional analogs of Chevalley groups (see \cite{Stein}). One can say that they are in the same relation with respect to infinite-dimensional Kac-Moody Lie algebras (see \cite{Kac},\cite{Ct}, \cite{MP}) as Chevalley groups are related to semi-simple finite-dimensional Lie algebras. It is useful to emphasize that from the geometric point of view the passage from Chevalley groups to Kac-Moody groups means the passage from linear algebraic groups to ind-algebraic groups (see \cite{Sha}). We refer to monographs \cite{Kum}, \cite{Mar} for the detailed exposition of the above said.

Let $\widetilde G(K)$ be an (untwisted) affine Kac-Moody group over the field $K$. We want to prove that if two affine Kac-Moody groups $\widetilde G_1(K_1)$ and $\widetilde G_2(K_2)$ are elementary equivalent then their generalized Cartan matrices coincide and the fields $K_1$ and $K_2$ are elementary equivalent.


Let $A$ be an $n \times n$ indecomposable generalized Cartan matrix of (untwisted) affine type. By an (untwisted) affine  Kac-Moody group we mean the value of the simply connected Tits functor \cite{Tits}, cf.\cite{PK}, \cite{KP1},\cite{KP2}, corresponding to $A$. So, one can write $\widetilde G(K)=\widetilde G_{sc}(A,K)$.   Let $R$ be the ring of  Laurent polynomials $R=K[t,t^{-1}]$, where $K$ be a field. Denote by $G_{sc}(R)\simeq G_\Phi(R)$ the simply connected Chevalley group over $R$, where $\Phi$ is a finite root system, associated with $A$. The adjoint Chevalley group is denoted by $G_{ad}^\Phi(R)$. As usual, the elementary subgroups of Chevalley groups are denoted by $E_{sc}^\Phi(R)$ and $E_{ad}^\Phi(R)$.

Denote by $\widetilde G_{ad}(A,K)$ the adjoint Kac-Moody group (\cite{MT},\cite{Garland},\cite{Tits}). Its elementary subgroup is $\widetilde E_{ad}(A,K)$.
Denote by $Z(\widetilde G_{sc}(K))$ and $Z(\widetilde E_{sc}(K))$ the centers of $\widetilde G_{sc}(K)=\widetilde G_{sc}(A,K)$ and $\widetilde E_{sc}(K)=\widetilde E_{sc}(A,K)$, respectively.


 We have the following global picture:

 $$
\begin{array}{ccccccccc}
&&&&1&&1&&\\
&&&&\uparrow&&\uparrow&&\\
&&1&\rightarrow&K^\times&\simeq&K^\times&\rightarrow&1\\
&&\uparrow&&\uparrow&&\uparrow&&\\
1 & \rightarrow & Z(\tilde{G}_{sc}(K)) & \rightarrow & \tilde{G}_{sc}(K) & \rightarrow &
H(R) & \rightarrow & 1\\
&&||&&\uparrow&&\uparrow&&\\
1 & \rightarrow & Z(\tilde{E}_{sc}(K)) & \rightarrow & \tilde{E}_{sc}(K) & \rightarrow &
G_{ad}^\Phi(R) & \rightarrow & 1\\
&&\uparrow&&\uparrow&&\uparrow&&\\
&&1&&1&&1&&
\end{array}
$$
\noindent
Here, by definition $H(R)$ is $\widetilde G_{ad}(A,K)$ and:
$$\tilde{G}_{sc}(K)/Z(\tilde{G}_{sc}(K)) \simeq H(R) := \left\langle
G_{ad}^\Phi(R),\ \zeta(u) \mid u \in K^\times
\right\rangle
\subset {\rm Aut}_K(\mathfrak{g}(\dot{A}) \otimes R),
$$


\noindent
where $G_{ad}^\Phi(R) = G_{sc}^\Phi(R)/Z(G_{sc}^\Phi(R))$ is the corresponding adjoint Chevalley group, and
$\zeta(u) \in {\rm Aut}(R)$ is defined by $t^{\pm 1} \mapsto u^{\pm 1} t^{\pm 1}$.
Also here,
$\mathfrak{g}(\dot{A}) \otimes R$
is the corresponding loop algebra, and $\dot{A}$ is a Cartan matrix associated with $A$.

Then, there is a surjective homomorphism $\pi: \widetilde G_{sc}(A,K)\to G_\Phi(R)$. Its kernel $Ker \pi$ is isomorphic to $K^\times $. We have, $Z(\widetilde G_{sc}(A,K))=Z(\widetilde E_{sc}(A,K))$ and

$$
1 \to Z(\widetilde{E}_{sc}(A,K)) \to \widetilde{E}_{sc}(A,K)\to G_{ad}^\Phi(R) \to 1.
$$

 Then,

$$
\widetilde E_{sc}(A,K)/Z(\widetilde E_{sc}(A,K))\simeq  G_{ad}^\Phi(R)\simeq  E_{ad}^\Phi(R).
$$

\noindent
The group $G_{ad}^\Phi(R)= E_{ad}^\Phi(R)=E_{ad}^\Phi(K[t,t^{-1}])$ is used to be called loop group \cite{Garland}. The following commutator formulas are useful. Assume $|K|\geq 4$. Then
$$
[\widetilde G_{sc}(A,K), \widetilde G_{sc}(A,K)]=\widetilde E_{sc}(A,K), \quad [\widetilde E_{sc}(A,K), \widetilde E_{sc}(A,K)]=\widetilde E_{sc}(A,K).
$$
In the similar manner
$$
[\widetilde G_{ad}(A,K), \widetilde G_{ad}(A,K)]=\widetilde E_{ad}(A,K), \quad [\widetilde E_{ad}(A,K), \widetilde E_{ad}(A,K)]=\widetilde E_{ad}(A,K),
$$

\noindent
and $\widetilde E_{ad}(A,K)$ is isomorphic to the loop group.

Given a group $G$ a subgroup $H$ is called definable if there is a first order formula $\phi$ such that $a\in H$, where $a\in G$ if and only if $a$ satisfies a formula $\phi$.


\begin{theorem}\label{th1}
Let  $\widetilde E_{1_{sc}}(A_1,K_1)$ and $\widetilde E_{2_{sc}}(A_2,K_2)$ be two elementary (simply connected) affine Kac-Moody groups whose ranks are bigger than 2.
 Suppose that $\widetilde E_{1_{sc}}(A_1,K_1)\equiv \widetilde E_{2_{sc}}(A_2,K_2)$. Then $A_1=A_2$  and $K_1\equiv K_2$.
\end{theorem}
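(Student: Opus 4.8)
The plan is to strip the center off the elementary Kac--Moody group, thereby reducing the assertion to a statement about elementary equivalence of Chevalley (loop) groups over rings of Laurent polynomials, and then to feed that into the first-order rigidity results for Chevalley groups of rank $\ge 2$ recalled in the introduction.

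First I would note that the center $Z(\widetilde E_{sc}(A,K))=\{x:\forall y\ (xy=yx)\}$ is cut out by one and the same formula in every group of this family, so the central quotient is a uniformly interpretable structure; hence $\widetilde E_{1_{sc}}(A_1,K_1)\equiv\widetilde E_{2_{sc}}(A_2,K_2)$ forces the quotients to be elementarily equivalent. By the isomorphism $\widetilde E_{sc}(A,K)/Z(\widetilde E_{sc}(A,K))\simeq G_{ad}^\Phi(R)\simeq E_{ad}^\Phi(R)$ with $R=K[t,t^{-1}]$ recorded above, this yields
\[
E_{ad}^{\Phi_1}\bigl(K_1[t,t^{-1}]\bigr)\ \equiv\ E_{ad}^{\Phi_2}\bigl(K_2[t,t^{-1}]\bigr),
\]
where $\Phi_i$ is the finite irreducible root system attached to the finite Cartan matrix $\dot{A}_i$. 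The hypothesis that the affine ranks exceed $2$ says precisely that each $\dot{A}_i$ has size $\ge 2$, i.e.\ that $\Phi_i$ has rank $\ge 2$; this is exactly the range in which the rigidity machinery for Chevalley groups is available, and it also avoids small-field pathologies because $K[t,t^{-1}]$ is always infinite.

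Next I would apply the elementary-equivalence results for Chevalley groups over commutative rings from \cite{Bu}, \cite{ALM}, \cite{AM}: for rank $\ge 2$ the coordinate ring is interpretable in the elementary Chevalley group by an interpretation scheme depending only on the root system, and the isomorphism type of the root system is itself an elementary invariant of the group. Applied to the displayed equivalence, this gives $\Phi_1=\Phi_2$ together with $K_1[t,t^{-1}]\equiv K_2[t,t^{-1}]$ as rings. Since an untwisted affine generalized Cartan matrix is the extended Cartan matrix of its underlying finite root system, and the $A_i$ are indecomposable, $\Phi_1=\Phi_2$ forces $A_1=A_2$. It remains to descend from $K_1[t,t^{-1}]\equiv K_2[t,t^{-1}]$ to $K_1\equiv K_2$; for this one interprets the field of constants inside the Laurent ring: $K$ is first-order definable in $R=K[t,t^{-1}]$ --- for instance $K^\times=\{u\in R^\times: u+1=0\ \text{or}\ u+1\in R^\times\}$, since a unit of the form $ct^n$ with $n\ne 0$ becomes a genuine binomial, hence a non-unit, after adding $1$ --- and this description is uniform in $K$, so elementary equivalence of the Laurent rings passes to elementary equivalence of their fields of constants.

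The step I expect to be the main obstacle is the middle one. The Chevalley-group rigidity theorems are most comfortably stated over fields or over special (e.g.\ local) rings, so one has to check that $R=K[t,t^{-1}]$ --- which is not local --- falls within their scope and, more to the point, that the interpretation of the coordinate ring in $E_{ad}^\Phi(R)$ is genuinely uniform across the family $\{K[t,t^{-1}]:K\ \text{a field}\}$; only uniformity lets elementary equivalence of the loop groups pass to elementary equivalence of the Laurent rings. Two ancillary points also need care: that the rigidity statements, usually phrased for the full Chevalley group, apply to the elementary subgroup $E_{ad}^\Phi$ (harmless here, since $G_{ad}^\Phi(R)=E_{ad}^\Phi(R)$ over $K[t,t^{-1}]$), and that in the adjoint case no $B_\ell/C_\ell$ ambiguity enters ``the root system is an elementary invariant'', so that $\Phi_1=\Phi_2$ holds literally and hence so does $A_1=A_2$.
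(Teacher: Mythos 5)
Your proposal is correct and, for the main reduction, follows the paper's own route exactly: the center is definable, so the central quotient is uniformly interpretable, giving $\widetilde E_{1_{ad}}(A_1,K_1)\equiv \widetilde E_{2_{ad}}(A_2,K_2)$, hence $E_{ad}^{\Phi_1}(K_1[t,t^{-1}])\equiv E_{ad}^{\Phi_2}(K_2[t,t^{-1}])$, and then Bunina's theorem (\cite{Bu}, Theorem 11) yields $\Phi_1=\Phi_2$ (so $A_1=A_2$, as the untwisted affine matrix is the extended Cartan matrix of $\Phi$) and $K_1[t,t^{-1}]\equiv K_2[t,t^{-1}]$; your worry about applying the Chevalley-group rigidity over the non-local ring $K[t,t^{-1}]$ is a reasonable caveat, but the paper relies on the same citation at the same point, so it is not a gap relative to the paper. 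Where you genuinely diverge is the last step: the paper identifies $K[t,t^{-1}]$ with the group algebra of $\mathbb{Z}$ and quotes Theorem 2 of \cite{KM} to pass from $K_1[t,t^{-1}]\equiv K_2[t,t^{-1}]$ to $K_1\equiv K_2$, whereas you define the coefficient field directly and uniformly inside $R=K[t,t^{-1}]$ via $K^\times=\{u\in R^\times: u+1=0\ \text{or}\ u+1\in R^\times\}$, which is valid because the units of $K[t,t^{-1}]$ are exactly the monomials $ct^n$ ($c\in K^\times$), and $ct^n+1$ with $n\neq 0$ is neither zero nor a monomial. Your version is more elementary and self-contained (relative definability of $(K,+,\cdot)$ in $R$ by a formula independent of $K$ immediately transfers elementary equivalence), while the paper's appeal to \cite{KM} outsources the argument to a more general result about what group algebras ``know'' about their coefficient fields; both are sound.
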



\begin{proof}

Suppose that $\widetilde E_{1_{sc}}(A_1,K_1)\equiv \widetilde E_{2_{sc}}(A_2,K_2)$. 
Since center is a definable subgroup the group $
\widetilde E_{sc}(A,K)/Z(\widetilde E_{sc}(A,K))$ is interpretable in  $
\widetilde E_{sc}(A,K)$ (see, for example, \cite{KMS}, page 131). Hence we have
$$
\widetilde E_{1_{sc}}(A_1,K_1)/Z(\widetilde E_{sc}(A_1,K_1))\equiv \widetilde E_{2_{sc}}(A_2,K_2)/Z(\widetilde E_{sc}(A_2,K_2))
$$ since both groups are interpretable in the elementary equivalent groups by the same first order formulas.
That is $\widetilde E_{1_{ad}}(A_1,K_1)\equiv \widetilde E_{2_{ad}}(A_2,K_2)$ and according to the isomorphism with Chevalley groups $E_{1_{ad}}^\Phi(R_1)\equiv  E_{2_{ad}}^\Psi(R_2)$. Recall that $R_i$, $i=1,2$ are the rings of Laurent polynomials $K_i[t,t^{-1}]$, respectively, while $\Phi$ and $\Psi$ are the root systems associated with $A_1$ and $A_2$, respectively.


Hence,
$$
E^\Phi_{ad}(K_1[t, t^{-1}])\equiv E^\Psi_{ad}(K_2[t, t^{-1}]).
$$







So, by Bunina (see \cite{Bu}, Theorem 11) we have $\Phi=\Psi$ and
$$
K_1[t,t^{-1}]\equiv K_2[t,t^{-1}].
$$

The Laurent polynomial ring $K[t,t^{-1}]$  is isomorphic to the group ring of the group $\mathbb Z$ over the field $K$. Then by Theorem 2 of \cite{KM},
$$
K_1\equiv K_2.
$$
Hence, $\Phi=\Psi$, $K_1\equiv K_2$ as required.


\end{proof}

The converse statement to Theorem \ref{th1} is not true.

\begin{prop} There exist fields $K_1$ and $K_2$ such that $K_1\equiv K_2$, but the affine Kac-Moody groups $\widetilde E_{1_{sc}}(A, K_1)$ and $\widetilde E_{2_{sc}}(A, K_2)$ are not elementary equivalent.
\end{prop}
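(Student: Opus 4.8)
The plan is to run the chain of implications appearing in the proof of Theorem~\ref{th1} and to break its would‑be converse at the single step that is genuinely one‑directional, namely ``$K_1\equiv K_2\Rightarrow K_1[t,t^{-1}]\equiv K_2[t,t^{-1}]$'', which is false. Fix $A$ of affine type and rank $>2$, with $\Phi$ the associated finite root system (so $\Phi$ has rank $\ge 2$), and suppose for contradiction that $\widetilde E_{1_{sc}}(A,K_1)\equiv \widetilde E_{2_{sc}}(A,K_2)$. As recorded in the proof of Theorem~\ref{th1}, the center $Z(\widetilde E_{sc}(A,K))$ is a definable subgroup, so $\widetilde E_{sc}(A,K)/Z(\widetilde E_{sc}(A,K))\simeq E_{ad}^\Phi(K[t,t^{-1}])$ is interpreted in $\widetilde E_{sc}(A,K)$ by one fixed tuple of first‑order formulas, uniformly in $K$ and $A$; hence $E_{ad}^\Phi(K_1[t,t^{-1}])\equiv E_{ad}^\Phi(K_2[t,t^{-1}])$, and, as in the proof of Theorem~\ref{th1}, Bunina's theorem (\cite{Bu}, Theorem~11) gives $K_1[t,t^{-1}]\equiv K_2[t,t^{-1}]$.

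Thus the proposition reduces to producing fields $K_1\equiv K_2$ with $K_1[t,t^{-1}]\not\equiv K_2[t,t^{-1}]$; for such a pair the above reduction yields a contradiction, so that $\widetilde E_{1_{sc}}(A,K_1)\not\equiv \widetilde E_{2_{sc}}(A,K_2)$ while $K_1\equiv K_2$, as required. Such pairs exist because the operation $K\mapsto K[t,t^{-1}]$ --- equivalently, by \cite{KM}, the group ring $K\mapsto K[\mathbb{Z}]$ --- does not preserve elementary equivalence. The mechanism is visible through ultrapowers: by the Keisler--Shelah theorem $K_1\equiv K_2$ yields an ultrafilter $\mathcal U$ with $K_1^{\mathcal U}\simeq K_2^{\mathcal U}$, but the Laurent construction does not commute with ultrapowers --- $(K^{\mathcal U})[t,t^{-1}]$ is only a proper subring of $(K[t,t^{-1}])^{\mathcal U}$, the latter containing ``Laurent polynomials of nonstandard degree'' --- so no isomorphism of ultrapowers of $K_1[t,t^{-1}]$ and $K_2[t,t^{-1}]$ is produced, and a separating first‑order sentence can be arranged. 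I would take $K_1$ to be a field whose ring $K_1[t,t^{-1}]$ interprets more structure than $\operatorname{Th}(K_1)$ alone pins down (for instance a field with sufficiently rich definable arithmetic) together with a suitable elementarily equivalent $K_2$, and quote the concrete separating sentence from the theory of polynomial rings.

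The hard part is precisely this last point: exhibiting an explicit pair $K_1\equiv K_2$ and a sentence of the ring language true in $K_1[t,t^{-1}]$ but false in $K_2[t,t^{-1}]$. The obvious candidates --- sentences trying to tell apart the standard degree group $\mathbb{Z}$ inside $(K^{\mathcal U})[t,t^{-1}]$ from the nonstandard one in $(K[t,t^{-1}])^{\mathcal U}$, or the standard from the nonstandard copy of the prime ring --- are each first‑order invisible, so the separating sentence must exploit the interaction between the coefficient field and the degree filtration, and $K_1,K_2$ have to be chosen so as to render that interaction first‑order‑visible. The remaining ingredients (definability of the center, the interpretation of the loop group, and Bunina's theorem) are all supplied by the discussion preceding Theorem~\ref{th1} and by its proof, so the argument is routine once the field pair is in hand.
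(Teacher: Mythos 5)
Your first paragraph is sound and is essentially the same reduction the paper makes: if $\widetilde E_{1_{sc}}(A,K_1)\equiv \widetilde E_{2_{sc}}(A,K_2)$, then definability of the center and the interpretation of the loop group give $E^\Phi_{ad}(K_1[t,t^{-1}])\equiv E^\Phi_{ad}(K_2[t,t^{-1}])$, and Bunina's theorem \cite{Bu} forces $K_1[t,t^{-1}]\equiv K_2[t,t^{-1}]$; so it suffices to exhibit fields $K_1\equiv K_2$ with $K_1[t,t^{-1}]\not\equiv K_2[t,t^{-1}]$. But that existence statement is exactly what you never establish, and you concede as much (``the hard part is precisely this last point''). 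The ultrapower heuristic does not close it: observing that $(K^{\mathcal U})[t,t^{-1}]$ is a proper subring of $(K[t,t^{-1}])^{\mathcal U}$ only shows that one particular way of producing an isomorphism of ultrapowers fails; by Keisler--Shelah you would have to rule out \emph{every} ultrafilter, i.e.\ actually produce a separating first-order sentence or cite a result that does, and your sketch ends with ``I would take $K_1$ to be a field with sufficiently rich definable arithmetic \dots and quote the concrete separating sentence,'' which is a placeholder, not a proof. So as written the argument establishes only the (easy) reduction, not the proposition.

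The paper fills precisely this gap with a concrete pair and two citations: take $K_1=\mathbb{Q}$ and $K_2$ a non-principal ultrapower of $\mathbb{Q}$, so that $K_1\equiv K_2$ automatically; by Corollary 3.8 of \cite{JL} the polynomial rings $\mathbb{Q}[t]$ and $K_2[t]$ are \emph{not} elementarily equivalent, and by Lemma 2 of \cite{KM} this non-equivalence passes from $K_i[t]$ to the Laurent rings $K_i[t,t^{-1}]$ (viewed as group rings of $\mathbb{Z}$); then \cite{Bu} and the definable-center step of Theorem \ref{th1} lift the non-equivalence to the Kac--Moody groups. To repair your proposal you must name such a pair and quote (or prove) the field-level separation; your intended ``field whose Laurent ring interprets more structure than its theory pins down'' is in effect the $\mathbb{Q}$-versus-ultrapower example, but without invoking the Jensen--Lenzing result (or an explicit separating sentence in its place) the key step of the proof is missing.
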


\begin{proof}

Let  $K_1=\mathbb{Q}$ be the field of rational numbers and let $K_2$ be some its ultrapower. So these fields are elementary equivalent, i.e., $K_1\equiv K_2$. However, the polynomial rings $K_1[t]$ and $K_2[t]$ are not elementary equivalent (see Corollary 3.8 in \cite{JL}). Then the rings of
 Laurent polynomials $K_1[t,t^{-1}]$ and
  $K_2[t,t^{-1}]$ are not elementary equivalent as well, (see Lemma 2,  \cite{KM}). Then, $E_\Phi(K_1[t, t^{-1}])$ is not elementarily equivalent to  $E_\Phi(K_2[t, t^{-1}])$ by \cite{Bu}. Since $\widetilde E_{ad}(A,K_i)\simeq E^\Phi_{ad}(K_i[t, t^{-1}])$ the Kac-Moody groups $\widetilde E_{1_{sc}}(A,K_1)$ and $\widetilde E_{2_{sc}}(A, K_2)$ are not elementarily equivalent.

\end{proof}
\begin{prop}\label{propag} Let $K_1$ and $K_2$ be algebraically closed fields of characteristic zero. Then the
loop groups    $\widetilde E_{1_{ad}}(A_1, K_1)$ and $\widetilde E_{2_{ad}}(A_2,K_2)$ of rank bigger than one  are  elementarily equivalent if and only if $A_1=A_2$, $tr.deg_\mathbb{Q}(K_1)$ and $tr.deg_\mathbb{Q}(K_2)$  either both infinite or both finite and equal. Here  $tr.deg_\mathbb{Q}(K)$ stands for the transcendence degree of the field $K$ over $\mathbb{Q}$.
\end{prop}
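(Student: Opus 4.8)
The plan is to reduce the statement, as in the proof of Theorem \ref{th1}, to a statement about Laurent polynomial rings and then to invoke the description of the first order theory of such rings. Recall $\widetilde E_{i_{ad}}(A_i,K_i)\simeq E_{ad}^{\Phi_i}(K_i[t,t^{-1}])$, where $\Phi_i$ is the finite root system attached to $A_i$; since $A_i$ is an untwisted affine generalized Cartan matrix, $\Phi_i$ and $A_i$ determine each other, so the equality $A_1=A_2$ may be replaced throughout by $\Phi_1=\Phi_2$. As the loop groups have rank at least $2$, Bunina's theorem (\cite{Bu}, Theorem 11) applies in both directions and shows that $\widetilde E_{1_{ad}}(A_1,K_1)\equiv\widetilde E_{2_{ad}}(A_2,K_2)$ is equivalent to the conjunction of $\Phi_1=\Phi_2$ and $K_1[t,t^{-1}]\equiv K_2[t,t^{-1}]$. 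Hence the proposition is equivalent to the assertion that, for $K_1,K_2$ algebraically closed of characteristic zero, $K_1[t,t^{-1}]\equiv K_2[t,t^{-1}]$ holds if and only if $tr.deg_\mathbb{Q}(K_1)$ and $tr.deg_\mathbb{Q}(K_2)$ are either both infinite or both finite and equal.

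For this reduced statement I would appeal to the analysis of the first order theory of polynomial and Laurent polynomial rings: by \cite{JL}, together with \cite{KM}, Lemma 2, for the passage between $K[t]$ and $K[t,t^{-1}]$, one has for arbitrary fields $F_1,F_2$ that $F_1[t,t^{-1}]\equiv F_2[t,t^{-1}]$ exactly when $F_1\equiv F_2$ and the transcendence degrees of $F_1,F_2$ over the prime field agree in the sense above. The underlying mechanism is that $K[t,t^{-1}]$ interprets its ring of constants $K$ — the definable set $\{c : c=0 \text{ or } (c \text{ is a unit and } (c=1 \text{ or } c-1 \text{ is a unit}))\}$ — together with a definable copy of $(\mathbb{Z},+)$ coming from the unit group modulo constants and, via unique factorization, enough coding to quantify over finite tuples over $K$ and over $\mathbb{Z}$; in particular, for each $n$ the assertion $tr.deg_\mathbb{Q}(K)\ge n$ (quantify over $a_1,\dots,a_n\in K$ and over nonzero $p\in\mathbb{Q}[x_1,\dots,x_n]$, the latter coded through the interpreted copy of $\mathbb{Z}$, and require $p(a_1,\dots,a_n)\ne 0$) becomes a single first order sentence in the language of rings, and among algebraically closed fields of a fixed characteristic no finer invariant is detected. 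Since $ACF_0$ is complete, $K_1\equiv K_2$ is automatic, so $K_1[t,t^{-1}]\equiv K_2[t,t^{-1}]$ reduces precisely to the transcendence-degree condition.

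To close the argument I record the two endpoints of the converse. If $tr.deg_\mathbb{Q}(K_1)=tr.deg_\mathbb{Q}(K_2)=d<\infty$, then both fields are isomorphic to $\overline{\mathbb{Q}(x_1,\dots,x_d)}$, hence $\widetilde E_{1_{ad}}(A,K_1)\simeq\widetilde E_{2_{ad}}(A,K_2)$ and in particular the two are elementarily equivalent; if both transcendence degrees are infinite, the reduced statement gives $K_1[t,t^{-1}]\equiv K_2[t,t^{-1}]$, and Bunina's theorem then yields $\widetilde E_{1_{ad}}(A,K_1)\equiv\widetilde E_{2_{ad}}(A,K_2)$.

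The main obstacle is precisely the reduced statement: showing that $Th(K[t,t^{-1}])$ encodes exactly $tr.deg_\mathbb{Q}(K)$ among algebraically closed fields of characteristic zero — that is, the interpretability of weak second order arithmetic over $K$ in $K[t,t^{-1}]$, and the fact that this interpretation is a complete invariant in the $ACF_0$ setting. I would handle this by citing \cite{JL} and \cite{KM} rather than reconstructing the interpretations; the remaining points — that $\Phi$ determines $A$ for untwisted affine type, and that Bunina's equivalence is valid in both directions for adjoint elementary Chevalley groups of rank at least $2$ — present no difficulty.
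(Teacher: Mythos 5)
Your argument follows the paper's own route: you reduce via $\widetilde E_{i_{ad}}(A_i,K_i)\simeq E_{ad}^{\Phi_i}(K_i[t,t^{-1}])$ and Bunina's theorem (used as an equivalence, in both directions) to the question of when $K_1[t,t^{-1}]\equiv K_2[t,t^{-1}]$, and then settle that question by the Jensen--Lenzing results transferred to Laurent polynomials through \cite{KM}, which is exactly the paper's auxiliary lemma following Proposition \ref{propag}. One caution: the biconditional you state for \emph{arbitrary} fields is more than \cite{JL} gives (the ``if'' direction requires extra hypotheses, as in Proposition 3.19 there, which the paper only extends to Laurent polynomials in the algebraically closed setting), but since you apply it only to algebraically closed fields of characteristic zero --- handling the finite transcendence degree case by an outright isomorphism --- this does not affect the correctness of your proof.
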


Since algebraically closed fields are elementarily equivalent if and only if they have the same characteristic this means that for elementarily equivalent zero characteristic fields  $\widetilde E_1(A_1,K_1)\equiv\widetilde E_2(A_2,K_2)$ if and only if $A_1=A_2$, $tr.deg_\mathbb{Q}(K_1)$ and $tr.deg_\mathbb{Q}(K_2)$  either both infinite or both finite and $tr.deg_\mathbb{Q}(K_1)=tr.deg_\mathbb{Q}(K_2)$.

  The proof of Proposition \ref{propag} follows from the proof of Theorem \ref{th1} and the following

\begin{lemma} Let $K_1$ and $K_2$ be algebraically closed fields of characteristic zero. Then $K_1[t, t^{-1}]\equiv K_2[t, t^{-1}]$ if and only if   $tr.deg_\mathbb{Q}(K_1)$ and $tr.deg_\mathbb{Q}(K_2)$  either both infinite or both finite and $tr.deg_\mathbb{Q}(K_1)=tr.deg_\mathbb{Q}(K_2)$.

\end{lemma}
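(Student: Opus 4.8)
The plan is to reduce the statement to a known model-theoretic criterion for elementary equivalence of polynomial rings over algebraically closed fields, and then transfer it across the passage from $K[t]$ to $K[t,t^{-1}]$. The key observation is that for an algebraically closed field $K$ of characteristic zero, the transcendence degree of $K$ over $\mathbb{Q}$ is an invariant that can be "read off" inside the ring $K[t]$ — indeed $K$ itself is interpretable in $K[t]$ (as, say, the field of constants, definable via divisibility/unit conditions together with the fact that $K$ is the algebraic closure of $\mathbb{Q}$ inside $K[t]$), so questions about $K$ reduce to questions about $K[t]$. The relevant result here is that two such polynomial rings $K_1[t]$ and $K_2[t]$ are elementarily equivalent if and only if $K_1 \equiv K_2$ \emph{as fields equipped with the requisite extra structure}; and for algebraically closed fields of characteristic zero this refines to the transcendence-degree condition, cf. the work cited in the paper on the model theory of polynomial rings (\cite{JL}, and the related analysis invoked in the preceding Proposition).

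First I would recall and make precise the one-variable statement: for $K_1, K_2$ algebraically closed of characteristic zero, $K_1[t] \equiv K_2[t]$ iff $tr.\deg_\mathbb{Q}(K_1)$ and $tr.\deg_\mathbb{Q}(K_2)$ are both infinite, or both finite and equal. One direction uses Keisler–Shelah together with the elementary chain / Löwenheim–Skolem machinery: if the two transcendence degrees agree (finite case) then $K_1 \cong K_2$ outright, hence $K_1[t] \cong K_2[t]$; in the infinite case one takes suitable ultrapowers to make them isomorphic. The nontrivial direction — that differing finite transcendence degrees force $K_1[t] \not\equiv K_2[t]$ — is exactly where one needs that $K$ (with enough structure to recover $tr.\deg$) is interpretable in $K[t]$, which is the content of \cite{JL}, Corollary 3.8 as used already in the Proposition above.

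Second, I would carry out the transfer between $K[t]$ and $K[t,t^{-1}]$ in both directions. The paper has already invoked \cite{KM}, Lemma 2, to the effect that $K_1[t,t^{-1}] \equiv K_2[t,t^{-1}]$ is controlled by $K_1[t] \equiv K_2[t]$ (the Laurent ring being the localization of the polynomial ring at the single element $t$, which is a definable operation, and conversely $K[t]$ being interpretable in $K[t,t^{-1}]$ as the subring generated by the non-units together with a unit). So $K_1[t,t^{-1}] \equiv K_2[t,t^{-1}]$ iff $K_1[t] \equiv K_2[t]$, and combining with the previous paragraph yields the claimed equivalence. Concretely: $(\Leftarrow)$ the transcendence-degree hypothesis gives $K_1[t] \equiv K_2[t]$ (isomorphism in the finite case, ultrapower argument in the infinite case), hence $K_1[t,t^{-1}] \equiv K_2[t,t^{-1}]$; $(\Rightarrow)$ elementary equivalence of the Laurent rings passes down to the polynomial rings by \cite{KM}, and then the interpretability of $K$ in $K[t]$ forces the transcendence degrees to coincide when finite.

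The main obstacle, as always in these arguments, is the interpretation of the field (with its transcendence-degree data) inside the ring — that is, verifying uniformly first-order-definable formulas picking out $K \subset K[t]$ and, crucially, expressing that a given finite list of elements of $K$ is algebraically independent over $\mathbb{Q}$. This is precisely what is delegated to \cite{JL}; I would not reprove it but cite it, exactly as the surrounding Propositions do. Everything else — the localization/interpretability dictionary between $K[t]$ and $K[t,t^{-1}]$, and the ultrapower construction matching algebraically closed fields of equal infinite transcendence degree — is routine and already available in \cite{KM} and via Keisler–Shelah.
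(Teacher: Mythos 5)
Your forward direction coincides with the paper's: from $K_1[t,t^{-1}]\equiv K_2[t,t^{-1}]$ you pass to $K_1[t]\equiv K_2[t]$ via \cite{KM} and then read off the transcendence-degree condition from \cite{JL}; that part is fine. The gap is in the converse direction. The transfer you rely on, namely that $K_1[t]\equiv K_2[t]$ implies $K_1[t,t^{-1}]\equiv K_2[t,t^{-1}]$, is not what Lemma 2 of \cite{KM} provides (as used in the paper it gives exactly the opposite implication), and your justification --- that the Laurent ring is the localization of $K[t]$ ``at the single element $t$, which is a definable operation'' --- does not stand as written: $t$ is not a definable element of $K[t]$ (any $at+b$ with $a\neq 0$ is an automorphic image of it), so the proposed interpretation needs a parameter, and one must argue uniformity over a definable set of parameters (for instance over all prime elements, using that for $K$ algebraically closed every prime of $K[t]$ has degree one and every localization at a prime is isomorphic to $K[t,t^{-1}]$). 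You neither state nor prove this. The paper avoids the issue entirely: its converse direction does not pass through $K[t]$ at all, but asserts that Proposition 3.19 of \cite{JL} (the positive equivalence result for polynomial rings over algebraically closed fields with matching transcendence-degree data) remains valid for Laurent polynomial rings, and applies it directly.

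Second, your sketch of the positive one-variable statement in the infinite case --- ``one takes suitable ultrapowers to make them isomorphic'' --- does not work as an argument. Keisler--Shelah lets you choose $\mathcal F$ with $\prod_{\mathcal F}K_1\simeq\prod_{\mathcal F}K_2$, but $\prod_{\mathcal F}\bigl(K[t]\bigr)$ is not $\bigl(\prod_{\mathcal F}K\bigr)[t]$ (it contains elements of nonstandard degree), so an isomorphism of the field ultrapowers yields no isomorphism of the ring ultrapowers, and there is no routine way to produce one. When the two transcendence degrees are infinite but distinct the fields are elementarily equivalent yet non-isomorphic, and establishing $K_1[t]\equiv K_2[t]$ (let alone $K_1[t,t^{-1}]\equiv K_2[t,t^{-1}]$) in that situation is precisely the nontrivial content delegated to \cite{JL}; it must be cited as such (and, for the Laurent case, adapted, as the paper does), not dismissed as routine. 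Only the finite-transcendence-degree half of your converse is complete, since there $K_1\simeq K_2$ and hence $K_1[t,t^{-1}]\simeq K_2[t,t^{-1}]$ outright.
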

\begin{proof}

Let $K_1[t, t^{-1}]\equiv K_2[t, t^{-1}]$.  Then $K_1[t]\equiv K_2[t]$ (see Lemma 2,  \cite{KM}). Then $tr.deg_\mathbb{Q}(K_1)$ and $tr.deg_\mathbb{Q}(K_2)$  either both infinite or both finite and $tr.deg_\mathbb{Q}(K_1)=tr.deg_\mathbb{Q}(K_2)$, (see Corollary 3.11, \cite{JL}). Note, that this direction is valid for arbitrary infinite fields.

Let $tr.deg_\mathbb{Q}(K_1)$ and $tr.deg_\mathbb{Q}(K_2)$  either both infinite or both finite and
 $$tr.deg_\mathbb{Q}(K_1)=tr.deg_\mathbb{Q}(K_2).$$
  Then Proposition 3.19 of \cite{JL} remains true for the rings of  Laurent polynomials and implies $K_1[t, t^{-1}]\equiv K_2[t, t^{-1}]$.

\end{proof}

\begin{question} Does Proposition \ref{propag} still valid if we replace  $\widetilde E_{i_{ad}}(A_i, K_i)$ by
$\widetilde E_{i_{sc}}(A_i, K_i)$, in it, $1 \leq i\leq 2$?

\end{question}



\section{Diofantine Problem for Affine Kac-Moody groups}\label{km}

In this section we mostly follow the philosophy described in the paper \cite{Myasnikov-Sohrabi2}. The \emph{Diophantine problem} (also called the \emph{Hilbert's tenth problem} or
the \emph{generalized Hilbert's tenth problem}) in a countable algebraic structure~$\mathcal A$,
denoted $\mathcal D(\mathcal A)$, asks whether there exists an algorithm that, given a finite system
$S$ of equations in finitely many variables and coefficients in~$\mathcal A$, determines if $S$
has a solution in~$\mathcal A$ or not. In particular, if $R$ is a countable ring then $\mathcal D(R)$ asks
whether the question if a finite system of polynomial equations with coefficients
in~$R$ has a solution in~$R$ is decidable or not.

 By definition the Diophantine problem in a structure~$\mathcal A$ \emph{reduces to} the
Diophantine problem in a structure~$\mathcal B$, symbolically $\mathcal D(\mathcal A)\leqslant \mathcal D(\mathcal B)$, if there is an algorithm that for a given finite system of equations $S$ with coefficients in~$\mathcal A$ constructs a system of equations $S^*$ with coefficients in~$\mathcal B$ such that $S$ has a solution in~$\mathcal A$
 if and only if $S^*$ has a solution in~$\mathcal B$. If the reducing algorithm is polynomial-time
then the reduction is termed polynomial-time (or Karp reduction). If our structures in question are uncountable
one needs to restrict the Diophantine problems  to equations with coefficients
from a fixed countable subsets of $\mathcal A$ and $\mathcal B$, see \cite{Myasnikov-Sohrabi2} for details.

\begin{definition}
 A subset (in particular a subgroup) $H$ of a group~$G$ is Diophantine in~$G$ if it is definable in~$G$ by a formula of the type
$$
\Phi(x)= \exists y_1 \dots \exists y_n \left( \bigwedge_{i=1}^k w_i(x,y_1,\dots, y_n)=1\right),
$$
 where $w_i(x,y_1,\dots, y_n)$ is a group word on $x; y_1; \dots ; y_n$.
  \end{definition}

  Such formulas are called \emph{Diophantine} (in number theory) or \emph{positive-primitive} (in model theory).  Following~\cite{M34}, we say that

  \begin{definition}
     A structure $\mathcal A$ is \emph{e-interpretable} (or \emph{interpretable by equations}, or \emph{Diophantine interpretable}) in a structure~$\mathcal B$ if $\mathcal A$ is interpretable  in~$\mathcal B$ by Diophantine formulas.
     \end{definition}

     The main point of this definition is that if $\mathcal A$ is
e-interpretable in~$\mathcal B$ then the Diophantine problem in~$\mathcal A$ reduces in polynomial
time (\emph{Karp reduces}) to the Diophantine problem in~$\mathcal B$. Hence, if the Diophantine problem in~$\mathcal B$ is decidable, then the Diophantine problem in~$\mathcal A$ should be decidable as well. This is a traditional way to prove that some Diophantine problem is undecidable.

Originally, Hilbert formulated the Diophantine problem for the ring of integers $\mathbb Z$. It was solved in the negative by Matiyasevich \cite{M52} building
on the work of Davis, Putnam, and Robinson \cite{M17}. Subsequently, the Diophantine problem has
been studied in a wide variety of commutative rings $R$, where it was shown to be undecidable by
reducing $\mathcal D(\mathbb Z)$ to $\mathcal D(R)$.  For
further information on the Diophantine problem in different rings and fields of number-theoretic
flavour see \cite{M60}, \cite{M59}.

Let, as usual, $\widetilde G_{sc}(A,K)$ be an affine Kac-Moody group, and $G_{ad}^\Phi(R)\simeq  E_{ad}^\Phi(R)=E_{ad}^\Phi(K[t,t^{-1}])$ be the corresponding loop group. This is a Chevalley group over the ring of Laurent polynomials  and our first goal is to study the Diophantine problem for this group.

 We shall show that  the
ring $R=K[t,t^{-1}]$ is \emph{e-interpretable} in the loop group $G_{ad}^\Phi(K[t,t^{-1}])$. Let $X_\alpha$ be a one-parametric subgroup generated by elementary unipotents $x_\alpha (t)$, where $ \alpha \in \Phi, t\in R$. Then

\begin{theorem}\label{theorM6.1}
 Ring $R=K[t,t^{-1}]$ is
e-interpretable in the group ~$G_{ad}^\Phi(K[t,t^{-1}])$, rank $\Phi\geq 2$, on every one-parametric subgroup $X_\alpha$, $\alpha \in \Phi$.
\end{theorem}
\begin{proof} The proof follows from Proposition 3, Proposition 4 and Theorem 4 from \cite{BMP}.
\end{proof}

\begin{corollary}\label{corM6.1}
Field $K$ is
e-interpretable in the group ~$G_{ad}^\Phi(K[t,t^{-1}])$, rank $\Phi\geq 2$, on every one-parametric subgroup $X_\alpha$, $\alpha \in \Phi$.
\end{corollary}
\begin{proof}

By Lemma 5 in \cite{KM} (see also \cite{KM1}, Lemma 8) the field $K$ is Diophantine in $K[t,t^{-1}]$. Then the result follows from Theorem \ref{theorM6.1} and the transitivity property of e-interpretability, see Proposition 2.4 of \cite{GMO1}.
\end{proof}

\begin{corollary}\label{cor3}
Let $G=\widetilde E_{sc}(A,K)$ be an elementary affine Kac-Moody group. Let $K$  be either a finite field $\mathbb F_q$ and rank of  irreducible affine root system is $\ge 2$ or $K$ be a Dedekind ring of the arithmetic type and rank $\ge 3$. Then $K$ is e-interpretable in $G=\widetilde E_{sc}(A,K)$.
\end{corollary}

\begin{proof} It is known that for $K$ specified in the conditions of the theorem the group $G=\widetilde E_{sc}(A,K)$ is finitely generated, see \cite{AG} and \cite{Re} for finite fields and \cite{Al} for Dedekind rings of arithmetic type. Let $a_1,\ldots,a_k$ be generators of $G$. Then the center $Z(G)$ of $G$ is
 e-defined in $G$ by the system of equations $[x, a_i]=1$, where $i=1, \ldots k$. So $Z(G)$ is a Diophantine normal subgroup in $G$. Then $G/Z(G)\simeq  E_{ad}^\Phi(R)=E_{ad}^\Phi(K[t,t^{-1}])$ is e-interpretable in $G$ by Lemma 2.7 from \cite{GMO1}. In view of Corollary\ref{corM6.1} $K$ is e-interpretable in  $E_{ad}^\Phi(K[t,t^{-1}])$. Once again by transitivity property  $K$ is e-interpretable in $G=\widetilde E_{sc}(A,K)$.

\end{proof}

\begin{corollary}\label{cor4}
Let $G=\widetilde G_{sc}(A,K)$ be an affine Kac-Moody group. Let $K=\mathbb F_q$, $|K|\geq 4$,   be a finite field and rank of  irreducible affine root system is $\ge 2$. Then $K$ is e-interpretable in $G=\widetilde G_{sc}(A,K)$.
\end{corollary}
\begin{proof}

Since  $|K|\geq 4$, then
$$
 [\widetilde E_{sc}(A,K), \widetilde E_{sc}(A,K)]=\widetilde E_{sc}(A,K).
$$

Moreover, $\widetilde E_{sc}(A,K)$ has finite commutator width, see Theorem D, \cite{KPV}. Hence, $\widetilde E_{sc}(A,K)$ is Diophantine in $\widetilde G_{sc}(A,K)$, see Section 2.3, \cite{GMO1}. So, $\widetilde E_{sc}(A,K)$ is e-interpretable in $\widetilde G_{sc}(A,K)$.  By Corollary \ref{cor3} the field $K$ is e-interpretable in $\widetilde E_{sc}(A,K)$. Hence $K$ is e-interpretable in $\widetilde G_{sc}(A,K)$.
\end{proof}
Note that in a similar way $K[t,t^{-1}]$ is also e-interpretable in $\widetilde G_{sc}(A,K)$.

In fact, the proof of Theorem \ref{theorM6.1} consists of two parts.  Modification of the Double Centralizer Theorem from \cite{ST} ( Theorem 1.6) and its more general variant Theorem 3 from \cite{BMP} imply that all one-parametric subgroups are defined by Diophantine formulas, and hence are Diophantine sets. It remains to interpret ring operations on $R$  inside the one-parametric subgroups $X_\alpha$. We  show how to do that for some generic case in the way most appropriate for generalizations in Kac-Moody groups.

Assume that $\Phi$ contains a subsystem $A_2$. This case can be considered as a testing one for further generalizations. Let $\alpha$ and $\beta$ be simple roots of $A_2$.

We e-interpret $R$ on $X_{\alpha+\beta}$ turning it into a ring $\langle X_{\alpha+\beta},\oplus, \otimes\rangle$
as follows.

For  $x,y\in X_{\alpha+\beta}$ we define
$$
x\oplus y=x\cdot y.
$$
Note that if $y=x_{\alpha+\beta}(a)$, $y=x_{\alpha+\beta}(b)$, then $xy=x_{\alpha+\beta}(a+b)$, which corresponds to the addition in~$R$.
To define $x\otimes y$ for given $x,y\in X_{\alpha+\beta}$  we need some notation. Let $x_1,y_1\in G$ be
such that
$$
x_1\in X_\alpha\text{ and }[x_1, x_\beta(1)] = x;\quad  y_1\in X_\beta\text{ and }[x_\alpha(1),y_1] = y.
$$
Note that such $x_1,y_1$ always exist and unique, namely if $x=x_{\alpha+\beta}(a)$, $y=x_{\alpha+\beta}(b)$, then
$x_1=x_\alpha(a)$,  $y_1=x_\beta(b)$. Define
$$
x\otimes y:= [x_1,y_1].
$$
Observe, that in this case
$$
[x_1, y_1]= [x_\alpha(a),x_\beta(b)]=x_{\alpha+\beta}(ab),
$$
so it corresponds to the multiplication in~$R$.

The map $a\mapsto x_{\alpha+\beta}(a)$ gives rise to a ring isomorphism $R\to
\langle X_{\alpha+\beta},\oplus, \otimes\rangle$. The ring $ \langle X_{\alpha+\beta},\oplus, \otimes\rangle$
 is e-interpretable in~$G$. Indeed, $X_{\alpha+\beta}$ is a Diophantine set, as it was proved above. The defined addition is clearly Diophantine in~$G$. Since we are in $A_2$ case the subgroups
$X_\alpha$ and $X_\beta$ are Diophantine as well and the multiplication $\otimes$ is also Diophantine.


All remaining cases are more technical, but the proof of Definability for them is of the same flavor, see \cite{BMP}.

\begin{conj}\label{arb}Suppose that $\widetilde G(K)=\widetilde G_{sc}(A,K)$ is an arbitrary (not necessarily affine) Kac-Moody group over a ring $K$. Then, the ring $K$ is e-interpretable in the group $\widetilde G(K)$.
\end{conj}

To prove Conjecture \ref{arb} one needs to establish a version of Double Centralizer Theorem (or some other way of  description of real root subgroups) and to use more complicated than in Chevalley case commutator relations described in, for example,  \cite{CKMS}, \cite{BP}.

On the  other hand, since the group $G_{ad}^\Phi(K[t,t^{-1}])$ is defined by a system of  polynomial equations with integer coefficients it is easy to see that this group is e-interpreted in the ring $K[t,t^{-1}]$, cf., Proposition 6 in \cite{BMP}.
Hence, for loop groups $G=G_{ad}^\Phi(K[t,t^{-1}])$, $rk \Phi \geq 2$ the Diophantine problem in any group $G$  is Karp equivalent
to the Diophantine problem in the ring R, cf., Theorem 6 in \cite{BMP}.

\begin{prop} The Diophantine problem in the affine Kac-Moody group  $G=\widetilde G_{sc}(A,\mathbb F_q)$, $rk \geq 2$ is undecidable.
\end{prop}
\begin{proof}

Let $\mathcal D(G)$ be decidable. The ring $\mathbb F_q[t,t^{-1}]$ is e-interpretable in $G=\widetilde G_{sc}(A,\mathbb F_q)$. So, if the Diophantine problem in $G=\widetilde G_{sc}(A,\mathbb F_q)$ would be decidable then the Diophantine problem in $\mathbb F_q[t,t^{-1}]$ should be decidable as well. But this is not the case, see \cite{Pappas}, \cite{Phei}.

\end{proof}

{\bf Acknowledgements.} J. Morita was partially supported by the Grants-in-Aid
for Scientific Research of Japan (Grant No. 26400005). E.Plotkin  was supported by  ISF grants 1994/20,  1623/16  and the Emmy Noether Research Institute for Mathematics. We are very grateful to B.Kunyavskii and A.Miasnikov for careful reading of the final version of the manuscript and  important remarks.  Our special thanks go to anonymous referee for the numerous
valuable corrections.

\section{Data availability}

No datasets were generated or analyzed during the current study.
The research has a purely theoretical character related to algebra and model theory.

\section{Conflict of interest}

 On behalf of all authors, the corresponding author states that there is no conflict of interest.


\begin{thebibliography}{99}

\bibitem{Al}
D.~Allcock,  {\it Presentation of affine Kac–Moody groups over rings} Algebra Number Theory,  10 (3) (2016) 533--556.


\bibitem{ALM}
 N.~Avni, A.~Lubotzky, C.~Meiri, {\it First order rigidity of non-uniform higher-rank arithmetic
groups}, Invent. Math. (2019) 219--240.


\bibitem{AM}
 N.~Avni,  C.~Meiri, {\it On the model theory of higher rank arithmetic
groups}, arXiv 2008.01793v1 (2020) 59pp.

\bibitem{AG}
P.~Abramenko, Z.~Gates, {\it
Finite presentability of Kac-Moody groups over finite fields}, arXiv:1912.05611v1 [math.GR], 14pp.




\bibitem{BS}
J.~Bell, A.~Slomson, Models and ultraproducts: an introduction, North-Holland Publishing, 1969.

\bibitem{BP}
Y.~Billig, A.~Pianzola, {\it Root strings with two consecutive real roots}, Tohoku Math. J. (2) {\bf 47}, Number 3 (1995)
391--403.

\bibitem{BMP}
E.~Bunina, A.~Myasnikov, E.~Plotkin, {\it The Diophantine problem in Chevalley groups}, arXiv http://arxiv.org/abs/2304.06259.

\bibitem{Bu}
E.~I.~Bunina, {\it Isomorphisms and elementary equivalence of Chevalley groups over commutative
rings}, Sb. Math. {bf 210} (2019) 1067--1091.


\bibitem{Ct}
R.~Carter,  Lie algebras of finite and affine type, Cambridge University Press, Cambridge.
2005.






\bibitem{CKMS}
L.~Carbone, M.~Kownacki, S.~Murray, S.~Srinivasan, {\it  Commutator relations and structure constants for rank 2 Kac–Moody algebras}, J.of Algebra, {\bf 566}, (2021) 443--476.

\bibitem{CK}
C.C.~Chang, H.J.~Keisler, Model theory, North Holland, Amsterdam-New-York-Oxford, 1977.

\bibitem{M17} M.~Davis, H.~Putnam, J.~Robinson  {\it The Decision Problem for Exponential
Diophantine Equations}. Annals of Mathematics, {\bf 74} (3), (1961) 425--436.

\bibitem{Garland}
H.~Garland, {\it The arithmetic theory of loop groups},
Publications Mathématiques de l'IHÉS, Tome 52 (1980)  5--136.

\bibitem{M34} A.~Garreta,  A.~Miasnikov,  D.~Ovchinnikov  {\it Diophantine problems in commutative rings}, arXiv:2012.09787.


\bibitem{GMO1} A.~Garreta,  A.~Miasnikov,  D.~Ovchinnikov  {\it Diophantine problems in solvable groups}, Bulletin of Mathematical Sciences 10(1), (2020) 29pp.


\bibitem{JL}
C.~Jensen, H.~Lenzing,
Model Theoretic Algebra With Particular Emphasis on Fields, Rings, Modules,
CRC Press,(1989) 464pp.


\bibitem{Kac}
V.~Kac,  Infinite dimensional Lie algebras, 3rd edition, Cambridge University Press (1990).


\bibitem{KP1}
V.~Kac, D.~Peterson, {\it Defining relations of certain infinite-dimensional
groups}. In The mathematical heritage of \'{E}lie Cartan (Lyon, 1984) (1985) Ast\'{e}risque
(Num\'{e}ro Hors S\'{e}rie),  165--208.


\bibitem{KP2}
V.~Kac, D.~Peterson, {\it On geometric invariant theory for infinitedimensional
groups}. In Algebraic groups Utrecht 1986. Proceedings of a symposium
in honor of T. A. Springer, A. M. C. et al., Ed., Lecture Notes in Mathematics, 1271.
Springer, (1987) pp. 109--142.




\bibitem{KM}
O.~Kharlampovich, A.~Myasnikov,   {\it What does a group algebra \textquoteleft \textquoteleft know \textquoteright \textquoteright about a free group},  Annals of Pure and Applied Logic, {\bf 169} (2018) 523--547.

\bibitem{KM1}
O.~Kharlampovich, A.~Myasnikov,   {\it Equations in Algebra},  IJAC, {\bf 28}, No. 08 (2018) 1517--1533.


\bibitem{KMS}
O.~Kharlampovich, A.~Myasnikov, M.~Sohrabi, {\it  Rich groups, weak second-order logic, and applications}, Groups and Model Theory, Gagta Book 2, (2021) Walter de Gruyter, 127--191.


\bibitem{Kum}
S.~Kumar,  Kac-Moody groups, their flag varieties and representation theory,
Birkhauser, Boston, 2002.

\bibitem{KPV}
B.~Kunyavskii, E.~Plotkin, N.~Vavilov, {\it Bounded generation and commutator width of Chevalley groups: function case}, arXiv:1304.5052 [math.GR], to appear EJM, 55pp.


\bibitem{Ku}
A.~G.~Kurosh,  Lectures in general algebra. Pergamon Press, Oxford,
London, Edinburgh, N. Y., Paris, Frankfurt, 1965.




\bibitem{Mal2}
A.~I.~Mal’cev, {\it The elementary properties of linear groups}, Certain Problems
in Mathematics and Mechanics, Sibirsk. Otdelenie Akad. Nauk SSSR, Novosibirsk,
(1961),  110--132. English transl., Chapter XX in A.I.~Mal’tsev, The meta-mathematics of algebraic systems, Collected papers: 1936--1967, North-Holland, Amsterdam, (1971).


\bibitem{Mar}
T.~Marquis,  An Introduction to Kac–Moody Groups over Fields, EMS Textbooks in Mathematics, 2017.

\bibitem{M52}  Y.~Matijasevich  \emph{The Diophantineness of enumerable sets}. Doklady Akademii Nauk USSR, {\bf 191}, (1970) 279--282.

\bibitem{Myasnikov-Sohrabi2} A.~Myasnikov,  M.~Sohrabi \emph{The Diophantine problem in the classical matrix groups}. Izv. RAN. Ser. Mat., {\bf 85}(6), (2021)  205--244.

\bibitem{MP}
R.~Moody, A.~Pianzola,  Lie algebras with triangular decompositions,  Canad. Math. Soc. Ser. Monographs Adv. Texts, Wiley-Interscience, New York, 1995.


\bibitem{MT}
R.~Moody, K.~Teo,
{\it Tits systems with crystallographic Weyl groups}, J. Algebra, {\bf 21} (1972)
178--190.

\bibitem{Pappas}
P.~Pappas, {\it A Diophantine problem for Laurent polynomial rings}, Proceedings of the AMS, Volume 93, Number 4. (1985) 713--718.

\bibitem{Phei}
T.~Pheidas, {\it Diophantine undecidability for addition and divisibility in polynomial rings}, Fundamenta Mathematicae, 182 (2004) 205--220.

\bibitem{M59} T.~Pheidas,  K.~Zahidi \emph{Undecidability of existential theories of rings and fields: A survey}. Contemporary Mathematics, {\bf 270}, (2000) 49--106.

\bibitem{M60}  B.~Poonen \emph{Hilbert's Tenth Problem over rings of number-theoretic interest}.
Notes for Arizona Winter School on "Number theory and logic", 2003.





\bibitem{PK}
D.~Peterson, V.~Kac, {\it  Infinite flag varieties and
conjugacy theorems}, Proc. Nat. Acad. Sci. USA 80, (1983)
1779--1782.

\bibitem{Re}
B.Remy,  Buildings and Kac-Moody Groups. In: Sastry, N. (eds) Buildings, Finite Geometries and Groups. Springer Proceedings in Mathematics, vol 10. Springer, New York, NY.

\bibitem{Sha}
 I.~Shafarevich, {\it On some infinite-dimensional groups}, II Math USSR Izvestija 18 (1982), pp. 185--194.


\bibitem{ST}
D.~Segal, K.~Tent, {\it Defining $R$ and $G(R)$}, arXiv:2004.13407, (2020) 30 pp.




\bibitem{Stein}
R.~Steinberg.  Lectures on Chevalley groups. Yale University, 1967.


\bibitem{SM}
M.~Sohrabi, A.~Myasnikov, {\it Bi-interpretability with Z and models of the complete elementary theories of $SL_n(\mathcal O)$, $T_n(\mathcal O)$ and $GL_n(\mathcal O)$, $n\geqslant 3$}, arXiv:2004.03585, (2020) 19pp.


\bibitem{Tits}
J.~Tits, {\it Uniqueness and Presentation of Kac-Moody  Groups over Fields}, Journal of Algebra,
 {\bf 105}, (1987) 542--573.


 \bibitem{Zilber}
B.~Zilber, {\it Some model theory of simple algebraic groups over algebraically
closed fields}, Colloq. Math., {\bf 48:2}, (1984) 173--180.


\end{thebibliography}
\end{document}